\newtheorem{thm}[subsection]{Theorem}
\newtheorem{defn}[subsection]{Definition}
\newtheorem{prop}[subsection]{Proposition}
\newtheorem{corr}[subsection]{Corollary}
\newtheorem{remark}[subsection]{Remark}
\theoremstyle{definition}
\newcommand{\R}{\mathbb R}
\newcommand{\Q}{\mathbb Q}
\newcommand{\Z}{\mathbb Z}
\newcommand{\C}{\mathbb C}
\newcommand{\CP}{\mathbb P}
\newcommand{\ess}{{\mathbb S}}
\DeclareMathOperator{\Hom}{Hom}
\DeclareMathOperator{\Ext}{Ext}
\DeclareMathOperator{\Alg}{Alg}
\DeclareMathOperator{\BO}{BO}
\DeclareMathOperator{\BU}{BU}
\DeclareMathOperator{\BtUSpin}{B(\tilde{U}/Spin)}
\DeclareMathOperator{\U}{U}
\DeclareMathOperator{\Spin}{Spin}
\DeclareMathOperator{\SO}{SO}
\DeclareMathOperator{\BUO}{B(U/O)}
\DeclareMathOperator{\UO}{U/O}
\DeclareMathOperator{\USO}{U/SO}
\DeclareMathOperator{\BUSO}{B(U/SO)}
\DeclareMathOperator{\SUSO}{{SU/SO}}
\DeclareMathOperator{\tU}{\tilde{U}}
\DeclareMathOperator{\Or}{O}
\DeclareMathOperator{\MSO}{MSO}
\DeclareMathOperator{\MSpin}{MSpin}
\DeclareMathOperator{\SpU}{Sp/U}
\DeclareMathOperator{\KU}{KU}
\DeclareMathOperator{\sH}{H}
\DeclareMathOperator{\E}{E}
\DeclareMathOperator{\HC}{H(\C)}
\DeclareMathOperator{\Hrt}{H(\Z/2)}
\DeclareMathOperator{\Exp}{Exp}
\DeclareMathOperator{\De}{Q}
\DeclareMathOperator{\THH}{THH}
\DeclareMathOperator{\Sph}{S}
\DeclareMathOperator{\Gr}{G}
\DeclareMathOperator{\MU}{MU}
\DeclareMathOperator{\MSp}{MSp}
\DeclareMathOperator{\MtU}{M\tilde{U}}
\DeclareMathOperator{\HHh}{HH}
\newfont{\german}{eufm10}
 \DeclareMathOperator{\End}{End}
\newcommand\qu{/\kern-.7ex/}
\newcommand{\oh}{{\mathcal O}}
\newcommand{\ab}{{\rm ab}} 
\newcommand{\kO}{{\rm kO}}
\newcommand{\Qsymm}{{\sf QSymm}}
\newcommand{\MTM}{{\sf MTM}}
\newcommand{\Symm}{{\sf Symm}}
\newcommand{\MZV}{{\rm MZV}}
\newcommand{\Nsymm}{{\sf NSymm}}
\newcommand{\GT}{{\sf GT}}
\newcommand{\Gm}{{{\mathbb G}_m}}
\newcommand{\alg}{{\rm alg}}
\newcommand{\bzeta}{{\boldsymbol{\zeta}}}
 \newcommand{\mF}{{\mathfrak F}}
 \newcommand{\mf}{{\mathfrak f}}
\begin{document}
\pagestyle{plain}

\title
{The Stable Symplectic Category and a conjecture of Kontsevich}
\author{Nitu Kitchloo}
\author{Jack Morava}
\address{Department of Mathematics, Johns Hopkins University, Baltimore, USA}
\email{nitu@math.jhu.edu}
\email{jack@math.jhu.edu}
\thanks{Nitu Kitchloo is supported in part by NSF through grant DMS
  1005391.}

\date{\today}

{\abstract
We consider an oriented version of the stable symplectic category defined in \cite{N}. We show that the group of monoidal automorphisms of this category, that fix each object, contains a natural subgroup isomorphic to the solvable quotient (or a graded-abelian quotient) of the Grothendieck--Teichm\"uller group. This establishes a stable version of a conjecture of Kontsevich which states that groups closely related to the Grothendieck--Teichm\"uller group act on the moduli space of certain field theories \cite{KO}. The above quotient is also shown to be the motivic group of monoidal automorphisms of a canonical representation (or fiber functor) on the stable symplectic category.}
\maketitle

\tableofcontents
\section{Introduction}

\noindent
In his solution to the existence of deformation quantization of a Poisson manifold $(M, \omega)$, Kontsevich showed in \cite{K1} that there exists a quasi-equivalence of $L_\infty$-algebras:
\[ \mbox{dQ} :  \Gamma(M, \Lambda^\ast(M)) \, [[ \hbar ]] \longrightarrow \mbox{Hoch}^\ast(\mathcal{O}(M)) \, [[ \hbar ]],  \, \,  \quad \hbar \, \omega \leftrightarrow [\mathcal{O}_\hbar(M)], \] 
where the left hand side is the DGLA of poly-vector fields (with $\mbox{d}=0$) that controls Poisson structures on $M$, and the right hand side is the DGLA of poly-differential operators on $M$ that controls the associative deformations of the functions $\mathcal{O}(M)$ on $M$. Hence, up to equivalence, the Poisson structure $\omega$ yields a unique associative deformation $\mathcal{O}_\hbar(M)$. However, the map $\mbox{dQ}$ is not unique.  The set of all such equivalences is a torsor under the action of a certain pro-solvable, pro-algebraic group $\GT_\Q$ over $\Q$, known as the Grothendieck-Teichm\"uller group (see \S \ref{hotGT}). This group is a pro-solvable, pro-algebraic group with a Lie algebra given by the extension of a free Lie algebra on generators: $z_{3}, z_{5}, \ldots$, by a grading generator $z_0$:
\[ \mbox{Lie} \, \GT_\Q = \Q \langle z_0 \rangle \ltimes \langle z_3, z_5, \ldots, z_{2k+1}, \ldots \rangle, \quad \quad [z_0, z_{2k+1}] = (2k+1) z_{2k+1}. \]

\medskip
\noindent
 The above result suggests that if $(M,\omega)$ is the space of classical solutions of a given classical field theory, then the group $\GT_\Q$ acts on the collection of all possible rings of quantum observables: $\mathcal{O}_\hbar(M)$. Based on this  observation, and explicit computations of certain Feynman path integrals, Kontsevich conjectured \cite{KO} (Section 5) that groups closely related to the Grothendieck-Teich\"muller group act on the Moduli space of Quantum Field theories.

\medskip
\noindent
We will approach Kontsevich's conjecture from the standpoint of geometric quantization. As opposed to deformation quantization, the method of geometric quantization is an attempt to describe a quantum field theory directly and functorially with respect to the space of classical solutions. Functoriality above is to be understood as asking for the existence of a geometric quantization functor with values in the category of topological vector spaces:
\[ \mbox{gQ} : \mathcal{S} \longrightarrow \mbox{Top. V.S}, \]
where $\mathcal{S}$ is a suitable category of correspondences between symplectic manifolds known as Weinstein's Symplectic Category \cite{W,W2}. 
In particular, $\mbox{gQ}$ applied to the Poisson manifold of classical solutions $(M, \omega)$ in $\mathcal{S}$ should recover the quantum state space of the field theory in question. 

\medskip
\noindent
Notice that if we could construct an action of $\GT_\Q$ on $\mathcal{S}$ that fixes each object, we would obtain an action of the Grothendieck-Teich\"muller group (by pre-composition) on the set of field theories as suggested by the conjecture of Kontsevich above. It is our goal to make this a factual statement. However, in order to achieve that goal, we will need to first stabilize $\mathcal{S}$. As we shall see in a moment, there are problems with the definition of $\mathcal{S}$ as stated. 

\medskip
\noindent
In \cite{N} the first author defined a stabilization $h\ess$ of the symplectic category $\mathcal{S}$ introduced by A. Weinstein in the 1980's \cite{W,W2}. The objects of Weinstein's category are symplectic manifolds, and the morphisms between two symplectic manifolds $(M,\omega)$ and $(N,\eta)$ are lagrangian immersions into $\overline{M} \times N$, where the conjugate symplectic manifold $\overline{M}$ is defined by the pair $(M,-\omega)$. The composition $L_1 \ast L_2$ of two lagrangian immersions $L_1 \looparrowright \overline{M} \times N$ and $L_2 \looparrowright \overline{N} \times K$, is defined to be the fiber product: $L_1 \times_N L_2 \longrightarrow \overline{M} \times K$. This definition does not always yield a lagrangian immersion to $\overline{M} \times K$: to do so, the pullback defining it must be transverse, so Weinstein's construction is unfortunately not a genuine category. \medskip

\noindent
In \cite{N}, we described a way to extend the symplectic category to an honest category $h\ess$, by introducing a moduli space of stabilized (in the sense of homotopy theory) lagrangian immersions in a symplectic manifold of the form $\overline{M} \times N$ \footnote{under the assumption of monotonicity. Otherwise, one has the space of totally real immersions.}. This moduli space can be described as the infinite loop space corresponding to a certain Thom spectrum. Taking this as the space of morphisms defines a {\em stable symplectic (homotopy) category} $h\ess$ that is naturally enriched over the homotopy category of spectra (under smash product) \footnote{In \cite{N}, we lifted this category to an $A_\infty$-category $\ess$ enriched over spectra.}. Composition in $h\ess$ is well-defined and remains faithful to Weinstein's original definition. Geometrically, the stabilization of Weinstein's category can be seen as ``inverting the symplectic manifold $\C$", analogous to the introduction of an inverse to the projective line in the theory
of motives. In other words, we introduce a relation on the symplectic category that identifies two symplectic manifolds $M$ and $N$ if $M \times \C^k$ becomes equivalent to $N \times \C^k$ for some $k$. Note that the ring of observables corresponding to $\C$ is the (simple) algebra known as the Heisenberg algebra corresponding to a single free Boson. So in a suitable sense the stable symplectic category can be seen as the symplectic analog of the ``derived rings of observables" studied by G. Segal in \cite{GS}. The stable symplectic category also has variants defined by lagrangian immersions endowed with orientations or metaplectic structures (see \cite{N})

\medskip
\noindent
Here we study the group of monoidal automorphisms of the {\bf oriented} stable symplectic homotopy category: $hs\ess$. In other words, by extending coefficients to commutative algebras $\E$ over $s\Omega$, we consider a family of categories $hs\ess \wedge_{s\Omega} \E$. Consequently, we have the family of groups that represent monoidal automorphisms of the categories $hs\ess \wedge_{s\Omega} \E$ that fix each object. We describe this group (see corollary \ref{expo} and theorem \ref{thh}), and relate it in characteristic zero to an abelian quotient of the Gothendieck--Teichm\"uller group \cite{KO}. This establishes a stable version of a conjecture of Kontsevich which states that the Grothendieck--Teichm\"uller group acts on the moduli space of certain field theories (see \cite{KO}, section 5). The above abelian quotient is also shown to be the motivic group of monoidal automorphisms of a canonical representation (or fiber functor) on a truncation of the oriented stable symplectic category. The value of this functor on an object $M$ is the $s\Omega$-module $s\Omega(M)$ representing the space of stably immersed oriented lagrangians in $M$. 

\medskip
\noindent
This document is organized as follows: In section \ref{hcat} we recall the construction of the stable symplectic homotopy category. In section \ref{Kontsevich} we study the group of monoidal automorphisms of the oriented stable symplectic homotopy category over the rational numbers which fix each object. We identify this group with a graded abelian quotient of the Grothendieck--Teichm\"muller group and reinterpret it as validation of a stable conjecture of Kontsevich on the action of the Grothendieck-Teich\"muller group on the moduli space of field theories. In this section we also identify the above quotient as the motivic group of monoidal automorphisms of the fiber functor described above. Section \ref{Kontsevich} also contains a description of an integral model for the group of monoidal automorphisms of the oriented stable symplectic homotopy category. The Appendix summarizes some properties of the Grothendieck-Teichm\"uller group and related constructions.  In particular, we define a Hopf-Galois analog of the Grothendieck-Teichm\"uller group in the category of spectra, and identify its abelianization with the group of symmetries identified in Section \ref{Kontsevich}. 

\medskip
\noindent
Before we begin, we would like to thank M. Abouzaid, V. Angelveit, A. Baker, Y. Eliashberg, D. Gepner, M. Hazewinkel, J. Lind, B. Richter and H. L. Tanaka for helpful conversations related to this project.

\section{The Stable Symplectic homotopy category} \label{hcat}

\noindent
In this section we recall the construction of the stable symplectic homotopy category \cite{N}. Given a symplectic manifold $(M, \omega)$ of real dimension $2m$, we construct a spectrum $\Omega(M)$ so that the corresponding infinite loop space can be interpreted as a space whose points represent manifolds that admit totally-real immersions into $M \times \C^n$ for large values of $n$ (up to an equivalence that shall be made precise later). We will say that $M$ satisfies monotonicity if the cohomology class of the symplectic form $\omega$ is a scalar multiple of the first Chern class of $M$. Under the assumption of monotonicity, totally real immersions may be replaced by lagrangian immersions in the above interpretation of $\Omega(M)$. 


\noindent
{\bf \S 2.1 The basic construction}

\medskip
\noindent
Consider the Thom spectrum $\Sigma^n {\mathcal{G}}(\tau \oplus \C^n)^{-\zeta_n}$, where the bundle $\zeta_n$ is defined by virtue of the pullback diagram: 
\[
\xymatrix{
{\mathcal{G}}(\tau \oplus \C^n)       \ar[d] \ar[r]^{\zeta_n \quad} & \BO(m+n) \ar[d] \\
M      \ar[r]^{\tau \oplus \C^n \quad \quad} & \BU(m+n),
}
\]
where $\tau$ denotes (homotopy unique) complex structure on the tangent bundle of $M$ compatible with the symplectic form $\omega$. In \cite{N}, we used the work of D. Ayala \cite{A} to show that for $n>0$, the infinite loop space $\Omega^{\infty-n} ({\mathcal{G}}(\tau \oplus \C^n)^{-\zeta_n})$ can be interpreted as the moduli space of manifolds $L^{m+n} \subset \R^{\infty} \times \R^n$, with a proper projection onto $\R^n$, and endowed with a totally-real immersion $L^{m+k} \looparrowright M \times \C^n$ (or lagrangian immersion, under the assumption of monotonicity). More precisely, the space $\Omega^{\infty-n} ({\mathcal{G}}(\tau \oplus \C^n)^{-\zeta_n})$ is uniquely defined by the property that given a smooth manifold $X$, the set of homotopy classes of maps $[X, \Omega^{\infty-n} ({\mathcal{G}}(\tau \oplus \C^n)^{-\zeta_n})]$, is in bijection with concordance classes of smooth manifolds $E \subset X \times \R^\infty \times \R^n$ over $X$, so that the first factor projection: $\pi : E \longrightarrow X$ is a submersion, and which are endowed with a smooth map $\varphi : E \longrightarrow M \times \C^n$ which restricts to a totally-real immersion (resp. lagrangian) on each fiber of $\pi$. As before, we demand that the third factor projection $E \longrightarrow \R^n$ be fiberwise proper over $X$. 

\smallskip
\noindent
Now the standard inclusion $\R^{n_1} \subseteq \R^{n_2}$, induces a natural map: 
\[ \varphi_{n_1,n_2} : \Sigma^{n_1} {\mathcal{G}}(\tau \oplus \C^{n_1})^{-\zeta_{n_1}} \longrightarrow \Sigma^{n_2} {\mathcal{G}}(\tau \oplus \C^{n_2})^{-\zeta_{n_2}}, \] 
which represents the map that sends a concordance class $E$, to $E \times \R^{n_2-n_1}$, by simply taking the product with the orthogonal complement of $\R^{n_1}$ in $\R^{n_2}$.

\begin{defn} \label{stablag}
Define the Thom spectrum $\Omega(M)$ representing the infinite loop space of stabilized totally-real (resp. lagrangian under the assumption of monotonicity) immersions in $M$ to be the colimit:
\[ \Omega(M) = \underline{\mathcal{G}}(M)^{-\zeta} := \mbox{colim}_n \, \, \Sigma^n {\mathcal{G}}(\tau \oplus \C^n)^{-\zeta_n}. \]
 Notice that by definition, we have a canonical homotopy equivalence: $\Omega(M \times \C) \simeq \Sigma^{-1} \Omega(M)$. Taking $M$ to be a point, we define $\Omega = \Omega(\ast) = (\UO)^{-\zeta}$, where the bundle $\zeta$ over $\UO$ is the virtual zero dimensional bundle over $(\UO)$ defined by the canonical inclusion $\UO \longrightarrow \BO$. \end{defn}

\smallskip

\noindent
Henceforth, we shall use the term ``lagrangian immersion" to mean ``totally-real immersion" if the condition of monotonicity fails to hold. We take this opportunity to also introduce the (abusive) convention of not decorating the stable vector bundle $\zeta$ by the underlying manifold $M$. Hopefully, the manifold $M$ will be clear from context. \medskip

\noindent
We may also describe $\Omega(M)$ as a Thom spectrum: Let the stable tangent bundle of $M$ of virtual (complex) dimension $m$ be given by a map $\tau : M \longrightarrow \Z \times \BU$. As the notation suggests, let $\underline{\mathcal{G}}(M)$ be defined as the pullback: 
\[
\xymatrix{
\underline{\mathcal{G}}(M)       \ar[d] \ar[r]^{\zeta \, \,  } & \Z \times \BO \ar[d] \\
M      \ar[r]^{\tau \quad} & \Z \times \BU.
}
\]
Then the spectrum $\Omega(M)$ is homotopy equivalent to the Thom spectrum of the stable vector bundle $-\zeta$ over $\underline{\mathcal{G}}(M)$ defined in the diagram above. 

\smallskip
\noindent
Notice that the fibration $\Z \times \BO \longrightarrow \Z \times \BU$ is a principal bundle up to homotopy, with fiber being the infinite loop space $\UO$. Hence, the spectrum $\Omega(M)$ is homotopy equivalent to a $\Omega$-module spectrum. Now, observe that we have the equivalence, up to homotopy, of $\UO$-spaces:
\[ \underline{\mathcal{G}}(M) \times_{\UO} \underline{\mathcal{G}}(N) \simeq \underline{\mathcal{G}}(M \times N). \]
This translates to a canonical homotopy equivalence:
\[ \mu : \Omega(M) \wedge_\Omega \Omega(N) \simeq  \Omega(M \times N). \]

\smallskip
\noindent
Let us now describe the stable symplectic homotopy category $h\mathbb{S}$. The objects of this category will be symplectic manifolds $(M,\omega)$ (see remark \ref{noncpct}), endowed with a compatible almost complex structure.  

\begin{defn}
The spectrum $\Omega(M,N)$ of morphisms in $h\mathbb{S}$ from $M$ to $N$ is the $\Omega$-module spectrum: 
\[ \Omega (M,N) := \Omega(\overline{M} \times N). \]
\end{defn}

\begin{remark}\label{noncpct}
Notice that objects in $h\mathbb{S}$ need not be compact. The price we pay for this, familiar from other contexts, is that we simply lose the identity morphisms for non-compact objects.  
\end{remark}

\noindent
The next step is to define composition. The simplest case 
\[ \Omega(M, \ast) \wedge_{\Omega} \Omega(\ast, N) \longrightarrow \Omega(M,N), \]
is the map $\mu$ constructed earlier. For the general case, consider $k+1$ objects objects $M_i$ with $0 \leq i \leq k$, and let the space $\underline{\mathcal{G}}(\Delta)$  be defined by the pullback:
\[
\xymatrix{
\underline{\mathcal{G}}(\Delta)       \ar[d]^{\xi} \ar[r] &  \underline{\mathcal{G}}(\overline{M}_0 \times M_1 \times \cdots  \times \overline{M}_{k-1} \times M_k) \ar[d] \\
\overline{M}_0 \times (M_1 \times \cdots \times M_{k-1})  \times M_k    \ar[r]^{\Delta \quad \quad \quad \quad} &  \overline{M}_0 \times (M_1 \times \overline{M}_1) \times \cdots \times (M_{k-1} \times \overline{M}_{k-1}) \times M_k 
}
\]
where $\Delta$ denotes the product to the diagonals $\Delta : M_i \longrightarrow M_i \times \overline{M}_i$, for $0 < i < k$. 

\medskip
\noindent
Now notice that the fibrations defining the pullback above are direct limits of smooth fibrations with compact fiber. Furthermore, the map $\Delta$ is a proper map for any choice of $k+1$-objects (even if they are non-compact). In particular, we may construct the Pontrjagin--Thom collapse map along the top horizontal map by defining it as a direct limit of Pontrjagin--Thom collapses for each stage. 

\smallskip
\noindent
Let $\zeta_i$ denote the individual structure maps $\underline{\mathcal{G}}(\overline{M} _{i-1} \times M_i) \longrightarrow \Z \times \BO$, and let $\eta(\Delta)$ denote the normal bundle of $\Delta$. Performing the Pontrjagin--Thom construction along the top horizontal map in the above diagram yields a morphism of spectra:
\[ \varphi : \Omega(M_0, M_1) \wedge_\Omega \cdots \wedge_\Omega \Omega(M_{k-1}, M_k) \simeq \Omega(\overline{M}_0 \times M_1 \times \cdots  \times \overline{M}_{k-1} \times M_k) \longrightarrow \underline{\mathcal{G}}(\Delta)^{-\lambda} \]
where $\lambda : \underline{\mathcal{G}}(\Delta)  \longrightarrow \Z \times \BO$ is the formal difference of the bundle $\bigoplus \zeta_i$ and the pullback bundle $\xi^* \eta(\Delta)$.

\medskip
\noindent
The next step in defining composition is to show that $ \underline{\mathcal{G}}(\Delta)^{-\lambda}$ is canonically homotopy equivalent to $\Omega(M_0, M_k) \wedge (M_1 \times \cdots \times M_{k-1})_+$, where $(M_1 \times \cdots \times M_{k-1})_+$ denotes the manifold $M_1 \times \cdots \times M_{k-1}$ with a disjoint basepoint. To achieve this, it is sufficient to construct a $\UO$-equivariant map over $\overline{M}_0 \times (M_1 \times \cdots \times M_{k-1})  \times M_k$:
\[ \psi : \underline{\mathcal{G}}(\overline{M}_0 \times M_k) \times (M_1 \times \cdots \times M_{k-1}) \longrightarrow \underline{\mathcal{G}}(\Delta), \]
that pulls $\lambda$ back to the bundle $\zeta \times 0$. The construction of $\psi$ is straightforward. We define:
\[ \psi( \lambda, m_1, \ldots, m_{k-1}) = \lambda \oplus \Delta(T_{m_1}(M_1)) \oplus \cdots \oplus \Delta(T_{m_{k-1}}(M_{k-1})), \]
where $\Delta(T_m(M)) \subset T_{(m,m)}(M \times \overline{M})$ denotes the diagonal lagrangian subspace. Now let $\pi : \underline{\mathcal{G}}(\Delta)^{-\lambda} \longrightarrow \Omega(M_0,M_k)$ be the projection map that collapses $M_1 \times \cdots \times M_{k-1}$ to a point. 

\begin{defn}
We define the composition map to be the induced composite:
\[ \pi \varphi :  \Omega(M_0, M_1) \wedge_\Omega \cdots \wedge_\Omega \Omega(M_{k-1}, M_k) \longrightarrow \underline{\mathcal{G}}(\Delta)^{-\lambda} \longrightarrow \Omega(M_0,M_k). \]
We leave it to the reader to check that composition as defined above is homotopy associative. 
\end{defn}

\noindent
{\bf \S 2.2 The identity morphism:}

\medskip
\noindent
We now show that a compact manifold $(M, \omega)$ has an identity morphism: 
\begin{prop}
Let $M$ be a compact manifold, and let $[id] : \Sph \longrightarrow \Omega(M,M)$ be a representative of homotopy class of the diagonal (lagrangian) embedding $\Delta : M \longrightarrow \overline{M} \times M$. Then $[id]$ is indeed the identity for the composition defined above. 
\end{prop}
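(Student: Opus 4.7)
The strategy is to unwind the composition formula when one factor is $[id]$ and reduce to the classical observation that Weinstein composition with the diagonal lagrangian recovers the original lagrangian. By the symmetry of the construction, it suffices to verify the right identity $[id] \ast f = f$ for $f \in \Omega(M, N)$; the argument for the left identity is entirely analogous.

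First I would identify $[id] \wedge_\Omega f$ explicitly via the moduli-theoretic interpretation of $\Omega^\infty \Omega(-)$ recalled above. The class $[id]$ is represented by $M$ itself, equipped with the diagonal embedding $\Delta \colon M \hookrightarrow \overline{M} \times M$, while $f$ is represented by a stable (oriented) lagrangian immersion $\varphi \colon L \looparrowright \overline{M} \times N$. The canonical equivalence $\mu$ from Section 2.1 then identifies $[id] \wedge_\Omega f$ with the product immersion $M \times L \to \overline{M} \times M \times \overline{M} \times N$ given by $(m, l) \mapsto (m, m, \pi_{\overline{M}}\varphi(l), \pi_N \varphi(l))$, regarded as a class in $\Omega(\overline{M} \times M \times \overline{M} \times N)$.

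Next I would analyze the Pontrjagin--Thom collapse along the middle diagonal $\Delta \colon M \to M \times \overline{M}$. Geometrically, the pullback $\underline{\mathcal{G}}(\Delta)$ cuts out the locus $\{(m,l) : m = \pi_{\overline{M}}\varphi(l)\}$, which is canonically isomorphic to $L$ via projection to the second factor, with residual immersion $l \mapsto (\pi_{\overline{M}}\varphi(l), \pi_N \varphi(l))$ into $\overline{M} \times N$. The essential bookkeeping is that the normal bundle $\eta(\Delta) \cong TM$ (viewed through the compatible complex structure on $M$) cancels against the contribution to $\lambda$ coming from the $\overline{M} \times M$ factor, so that under the canonical equivalence $\underline{\mathcal{G}}(\Delta)^{-\lambda} \simeq \Omega(M, N) \wedge M_+$ of Section 2.1 the PT image becomes $\varphi \wedge [M]_+$. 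Applying the projection $\pi$ that collapses $M$ to a point then returns exactly the class of $\varphi$, i.e., $f$.

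The main obstacle is the bundle bookkeeping in the previous step: one must verify that the canonical map $\psi$ constructed in Section 2.1 really does identify the explicit PT image above with $\varphi \wedge [M]_+$, i.e., that the Thom isomorphism arising from the identification $\eta(\Delta) \cong TM$ interacts correctly with the stable oriented $\mathcal{G}$-structure on $\varphi$. This amounts to a naturality check, facilitated by the compactness of $M$ (which guarantees that $\Delta$ is proper and that the PT construction is well-defined and finite-dimensional at each stage before passing to the colimit defining $\Omega$). Once this compatibility is in place, the identity law reduces to the tautological geometric identity $M \times_M L \cong L$.
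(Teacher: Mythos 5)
Your proposal is correct in substance and rests on the same two geometric facts as the paper's proof: the relevant transversality is automatic (the product of the diagonal $\Delta(M)$ with anything is transverse to the configuration carrying the other factor, the intersection being the triple diagonal --- in your representative-level picture, the graph of $\pi_{\overline{M}}\circ\varphi$, canonically diffeomorphic to $L$), and the normal bundle of the diagonal cancels the corresponding summand of $\lambda$. The difference is one of packaging. You argue on representatives via the concordance/moduli interpretation, which establishes $[id]\ast f\simeq f$ class by class; to conclude that multiplication by $[id]$ is homotopic to the identity \emph{self-map of the spectrum} $\Omega(N,M)$ you should either run the same argument in families over an arbitrary base $X$ (which the concordance description supports) or lift it to the spectrum level. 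The paper does the latter: it factors the composite through $\Omega(N,M)\wedge\Delta(M)^{-\tau}$ in a homotopy-commutative diagram and then observes that the Pontrjagin--Thom collapse for the inclusion $\overline{N}\times\Delta_3(M)\hookrightarrow\overline{N}\times M\times\Delta(M)$, followed by the projection back to $\overline{N}\times M$, is the collapse associated to the identity map of $\overline{N}\times M$, hence induces the identity on $\Omega(N,M)$. This one-line factorization absorbs all of the Thom-isomorphism and $\mathcal{G}$-structure bookkeeping you flag as the main obstacle in your third paragraph, so there is nothing left to check there. What your version buys is a transparent link to Weinstein's original picture ($M\times_M L\cong L$); what the paper's version buys is a conclusion that follows purely from functoriality of the collapse, with no appeal to individual lagrangian representatives.
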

\begin{proof}
Given two manifolds $M,N$, let $\Delta(M) \subset \overline{M} \times M$ be a diagonal representative of $[id]$ as above. Observe that $\overline{N} \times \Delta(M) \times M$ is transverse to $\overline{N} \times M \times \Delta(M)$ inside $\overline{N} \times M \times \overline{M} \times M$. They intersect along $\overline{N} \times \Delta_3(M)$, where $\Delta_3(M) \subset M \times \overline{M} \times M$ is the triple (thin) diagonal. Hence we get a diagram 
\[
\xymatrix{
\Omega(N, M) \wedge \Sph \ar[dr] \ar[r] & \Omega(N, M) \wedge \Delta(M)^{-\tau} \ar[r]^{\Delta^{-\tau}} \ar[d] &  \Omega(N,M) \wedge_\Omega \Omega(M,M) \ar[d] \\
 & \Omega(N, M) \ar[r]^{=} & \Omega(N,M)}
\]
commutative up to homotopy, where the right vertical map is composition, and the left vertical map is the Pontrjagin--Thom collapse over the inclusion map 
\[ \overline{N} \times M = \overline{N} \times \Delta_3(M) \longrightarrow \overline{N} \times M \times \Delta(M). \]
Now consider the following factorization of the identity map: 
\[ \overline{N} \times M = \overline{N} \times \Delta_3(M) \longrightarrow \overline{N} \times M \times \Delta(M) \longrightarrow \overline{N} \times M \]
where the last map is the projection onto the first two factors. Performing the Pontrjagin--Thom collapse over this composite shows that 
\[ \Omega(N,M) \wedge \Sph \longrightarrow \Omega(N,M) \wedge \Delta(M)^{-\tau} \longrightarrow \Omega(N,M). \]
is the identity. It follows that right multiplication by $[id] : \Sph \rightarrow \Omega(M,M)$ induces the identity map on $\Omega(N,M)$, up to homotopy. A similar argument works for left multiplication. 
\end{proof}

\begin{remark} \label{cancomp}
Recall that given arbitrary symplectic manifolds $M$ and $N$, the composition map:
\[ \Omega(M,\ast) \wedge_{\Omega} \Omega(\ast, N) \simeq \Omega(M,N) \]
induces a natural decomposition of $\Omega(M,N)$. In particular, arbitrary compositions can be canonically factored using this decomposition, and can be computed by applying the ``inner product'' 
\[ \Omega(\ast, N) \wedge_{\Omega} \Omega(N, \ast) \longrightarrow \Omega. 
\]
to the factors. When $M$ is compact, this pairing defines an equivalence
\[ \Omega(\overline{M}) \longrightarrow \Hom_{\Omega}(\Omega(M), \Omega). \]

\end{remark} 

\noindent
It will also be important below that $h\mathbb{S}$ is a symmetric-monoidal category, with a product given by the cartesian product of symplectic manifolds. \medskip

\noindent
\begin{defn}{The construction of the {\bf oriented} stable symplectic homotopy category $s\ess$ is completely analogous, but with $\Or$ replaced by $\SO$; the commutative ring spectrum $s\Omega = (\USO)^{-\zeta}$ defines its coefficients.}
\end{defn}

\noindent
Now $\Omega$ is an (Eilenberg--MacLane) $\Hrt$-algebra, so we can regard $\ess$ as a category with morphism objects enriched over a classical differential graded algebra. This is not the case for $s\Omega$, but its rationalization $s\Omega \otimes \Q$ is again a generalized Eilenberg--MacLane spectrum, with
\[
s\Omega_* \otimes \Q = \Lambda_\Q[y_{4i+1}, \; i > 0]
\]
an exterior algebra on certain odd - degree generators. Moreover, the category $s\ess$ simplifies considerably when rationalized. In particular, the Thom isomorphism
\[
s\Omega(M)_* \otimes \Q  \cong \sH_*(M,s\Omega_* \otimes \Q)
\]
identifies $s\ess \otimes \Q$ with an (Arnol'd-H\"ormander-Maslov\dots) category of symplectic manifolds, whose morphisms are classical cohomological correspondences with compact support, but with coefficients in the graded ring $s\Omega_* \otimes \Q \cong \sH_*(\USO_+,\Q)$. \medskip

\section{A stable version of Kontsevich's conjecture} \label{Kontsevich}

\medskip
\noindent
As mentioned in the introduction, based on explicit computations, Kontsevich conjectures in \cite{KO} (section 5) that the motivic Galois group acts on the ``moduli space" of free field theories, and that in even dimensions, this action factors through the Grothendieck--Teichm\"uller group. Kontsevich was led to this conjecture by his work on the question of uniqueness of the deformation quantization functor. However, we plan to approach his conjecture from the standpoint of geometric quantization. Note that the moduli space of field theories is not a well defined object and so the above conjecture can only be formulated once we have an appropriate definition in place. 

\medskip
\noindent
Now an $(n+1)$-dimensional classical field theories that occur in physics is typically described by an action functional. The (conjectural) framework of quantization proceeds as follows: By taking solutions of the Euler-Lagrange equations about the germ of an $n$-manifold, one obtains a functor from the $(n+1)$-dimensional cobordism category to a variant of the symplectic category. \footnote{the infinite dimensional nature of symmetries force us to also consider infinite dimensional symplectic manifolds} Then the corresponding quantum field theory is constructed as the composite of the above functor with the geometric quantization functor, the latter having the symplectic category as its domain and taking values in the category of topological vector spaces. An automorphism of the symplectic category that fixes every object would therefore induce a deformation of any such field theory. 

\medskip
\noindent
In this section, we will provide a rigorous framework for Kontsevich's conjecture: we will compute a canonical subgroup of the group of monoidal automorphisms of the oriented {\em stable} symplectic (homotopy) category, and show that it is indeed an abelian quotient of the Grothendieck--Teichm\"uller group, thereby showing that a stable version of Kontsevich's conjecture is indeed true. 

\noindent
{\bf \S 3.1 Monoidal automorphisms of the stable symplectic category:} 

\medskip
\noindent
We begin this section by defining the group of monoidal automorphisms of the stable symplectic homotopy category. Recall that this category $h\ess$ is enriched over the category of $\Omega$-module spectra. Given a commutative $\Omega$-algebra $\E$, let $h\ess \wedge_\Omega \E$ denote the category enriched over $\E$-modules constructed by extending coefficients. 

\medskip
\begin{defn}
Define the group of automorphisms of the stable symplectic category as a derived group $G_\Omega$, whose $\E$-points: $G_\Omega(\E)$, for any commutative $\Omega$-algebra $\E$, are defined as the group of monoidal automorphisms of the category $h\ess \wedge_\Omega \E$ that fix each object. We similarly define the group $G_{s\Omega}$ as the group of automorphisms of the oriented stable symplectic homotopy category. 
\end{defn}

\medskip
\noindent
Our first order of business will be to consider certain endomorphisms of $hs\ess \wedge_{s\Omega} \E$ that will eventually turn out to be the elements in the Lie algebra of $G_{s\Omega}(\E)$. We begin with some preliminary constructions that don't require orientability:

\medskip
\noindent
Given a symplectic manifold $M$, recall that $\Omega(M)$ was a defined as a Thom spectrum: $\underline{\mathcal{G}}(M)^{-\zeta}$. Here $ \pi : \underline{\mathcal{G}}(M) \longrightarrow M$ was a principal $\UO$-bundle, supporting a stable real vector bundle $\zeta : \underline{\mathcal{G}}(M) \longrightarrow \BO$ of virtual dimension $m$. The bundle $\pi$ is classified by the map:
\[ \tau(\Omega, M) : M \longrightarrow \BUO. \]

\begin{thm} \label{torsion}
Given an object of $h\mathbb{S}$ represented by a symplectic manifold $(M,\omega)$, then $\tau(M, \Omega)$ factors through the map $\tau(M) : M \longrightarrow \BU$ that classfies the tangent bundle of $M$, followed by the projection map $\BU\longrightarrow \BUO$. Furthermore, the restriction of $\tau(\Omega, M \times \overline{M})$ along the diagonal: $\Delta : M \longrightarrow M \times \overline{M}$ is trivial.
\ \end{thm}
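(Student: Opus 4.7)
The first statement is essentially by construction, while the second requires a small K-theoretic input.

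For the first assertion, $\underline{\mathcal{G}}(M)$ is by construction the homotopy pullback of $\BO \to \BU$ along $\tau(M)$, so $\pi:\underline{\mathcal{G}}(M)\to M$ is a principal $\UO$-bundle (the homotopy fiber of $\BO\to \BU$ being $U/O = \UO$). The fibration $\BO \to \BU$ is itself a principal $\UO$-fibration up to homotopy, classified by the canonical map $\BU \to B(\UO) = \BUO$ appearing in the Puppe sequence $\UO \to \BO \to \BU \to \BUO$. By naturality of the classification of principal $\UO$-bundles under pullback, the classifying map $\tau(\Omega,M):M \to \BUO$ of $\pi$ is the composite of $\tau(M)$ with the projection $\BU \to \BUO$, which is exactly the factorization claimed.

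For the second assertion, I would compute $\Delta^* \tau(\Omega, M\times \overline{M})$ using the first part. The complex tangent bundle of $M \times \overline{M}$ pulls back along $\Delta$ to $\tau(M) \oplus \tau(\overline{M})$, and since the compatible complex structure on $\overline{M} = (M,-\omega)$ is the conjugate of that on $M$, we have $\tau(\overline{M}) \cong \overline{\tau(M)}$ as complex bundles. The key observation is that for any complex vector bundle $V$ the sum $V \oplus \overline{V}$ is canonically isomorphic to $V_\R \otimes_\R \C$, the complexification of its underlying real bundle; equivalently, the map $[V] \mapsto [V \oplus \overline{V}]$ on $\BU$ factors as realification $r:\BU \to \BO$ followed by complexification $c:\BO \to \BU$. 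Combined with the first part, this shows $\Delta^* \tau(\Omega, M \times \overline{M})$ factors as $M \to \BO \xrightarrow{c} \BU \to \BUO$, and the last two arrows are consecutive in the Puppe sequence, so they compose to null.

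The only technical point to nail down is the naturality of the isomorphism $V \oplus \overline{V} \cong c(V_\R)$ at the level of classifying spaces, together with the identification $\tau(\overline{M}) \cong \overline{\tau(M)}$ under the chosen conventions for the almost complex structure on $(M,-\omega)$; both are standard but deserve care with signs and orientations. Geometrically this statement reflects the fact that the diagonal of $M \times \overline{M}$ is lagrangian, and a lagrangian subspace of a symplectic vector space exhibits that space as the complexification of a real one.
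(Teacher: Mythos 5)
Your proposal is correct and follows essentially the same route as the paper: for the first part, the paper likewise observes that $\underline{\mathcal{G}}(M)$ is induced from the unitary frame bundle (equivalently, pulled back from the universal $\UO$-bundle $\BO \to \BU$), so $\tau(\Omega,M)$ factors as $\tau(M)$ followed by $\BU \to \BUO$; for the second, the paper also uses that $\Delta^*\tau(M\times\overline{M})$ is the complexification of $TM$, hence lifts through $\BO \to \BU$, whose composite with $\BU \to \BUO$ is null. The only difference is that you spell out the identification $V\oplus\overline{V}\cong c(V_\R)$, which the paper simply asserts.
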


\begin{proof}
Notice that the bundle $\pi : \underline{\mathcal{G}}(M) \longrightarrow M$ was induced from the frame bundle of $M$, with structure group $\U$, along the left action of $\U$ on $\UO$, so we can lift the map $\tau(\Omega, M)$, to $\BU$, to get the factorization through $\tau(M)$. This proves the first part of the claim. Now notice that the restriction of the tangent bundle of $M \times \overline{M}$ along the diagonal $\Delta$ is canonically isomorphic to the complexification of the tangent bundle of $M$. This may be restated as saying that one has a unique lift $\tau(\Delta)$ that makes the following commute:
\[
\xymatrix{
M \ar[d]^{\Delta} \ar[r]^{\tau(\Delta)}  & \BO \ar[d]  \\
M \times \overline{M} \ar[r]^\tau & \BU.}
\]
It follows that $\tau(\Omega, M \times \overline{M})$ is trivial when restricted along $\Delta$. 
\end{proof}

\medskip
\begin{remark} \label{newton}
The projection map $\BU \longrightarrow \BUO$ is injective in cohomology away from two. It is easy to see that its image is generated by classes $d_{2i+1}$ in degree $4i+2$, whose generating function can be expressed in terms of the Chern classes as:
\[ \sum_i d_i = \frac{\sum_i (-1)^i c_i}{\sum_i c_i} \equiv 1 - \sum_i 2c_{2i+1} + \mbox{decomposables}. \]
Similarly, if one considers the inclusion $\BO \longrightarrow \BU$, then this map is injective in homology away from two. If we let $\sum_i b_i$ denote the homogeneous generators in the image of $\sH_\ast(\C\CP^\infty) \subset \sH_\ast(\BU)$, then the image of $\sH_\ast(\BO)$ is the sub (polynomial) algebra generated by classes $a_i$ (away from two), given by:
\[ \sum_i a_i = (\sum_i b_i) (\sum_i (-1)^i b_i) \equiv 1 + \sum_{i>0} 2b_{2i} + \mbox{decomposables}. \]
\end{remark}

\medskip
\begin{defn}
Henceforth, we work in the oriented category $hs\mathbb{S}$, and we assume that $\pi_\ast \E$ is a $\Q$-vector space. Define $\mathcal{P}_{\E}(\BUSO)$ to be the graded $\E^\ast$-submodule of $\tilde{\E}^\ast(\BUSO)$ consisting of primitive elements in the (commutative) Hopf algebra $\E^\ast(\BUSO)$. 
\end{defn}

\begin{thm} \label{Lie1}
$\mathcal{P}_{\E}(\BUSO)$ induces primitive graded endomorphisms of the category $hs\ess \wedge_{s\Omega} \E$ that fix each object. In other words, there is a natural map of graded $\E^\ast$-modules:
\[ \mathcal{P}(\E) : \mathcal{P}_{\E}(\BUSO) \longrightarrow \End(hs \ess \wedge_{s\Omega} \E). \]
Furthermore, the image of $\mathcal{P}(\E)$ is contained in the subgroup of primitive functors, defined as functors $\varphi$ that fix each object, and are additive with respect to the monoidal structure on the morphisms: 
\[ \varphi(X \wedge_{\E} Y) = \varphi(X) \wedge_{\E} Y + X \wedge_{\E} \varphi(Y), \]
for all morphisms $X$ and $Y$. 
\end{thm}
\begin{proof}
Fix objects $(M, \omega)$ and $(N, \eta)$ of $hs\mathbb{S}$. Given an element $\alpha \in \mathcal{P}_{\E}(\BUSO)$, we define the action of $\mathcal{P}(\E)(\alpha)$ on $s\Omega(M, N) \wedge_{s\Omega} \E = s\Omega(\overline{M} \times N) \wedge_{s\Omega} \E := s\Omega(\overline{M} \times N)_{\E}$ as the cap product with $\alpha$, described as the composite map: 
\[ \alpha_\ast : s\Omega(\overline{M} \times N)_{\E} \longrightarrow  s\Omega(\overline{M} \times M)_{\E} \wedge (\overline{M} \times N)_+ \longrightarrow s\Omega(\overline{M} \times N)_{\E} \wedge \BUSO_+ \longrightarrow s\Omega(\overline{M} \times N)_{\E}, \]
where the first map is induced by the Thom diagonal map:
\[ s\underline{\mathcal{G}}(\overline{M} \times N)^{-\zeta} \longrightarrow s\underline{\mathcal{G}}(\overline{M} \times N)^{-\zeta} \wedge (\overline{M} \times N)_+. \]

\noindent
The second map is induced by $\tau(s\Omega, \overline{M} \times N)$, and the third map above is given by capping with the class $\alpha$. Now recall that $s\underline{\mathcal{G}}(\overline{M} \times N)$ is equivalent to the external product bundle $s\underline{\mathcal{G}}(\overline{M}) \times_{\USO} s\underline{\mathcal{G}}(N)$. In particular, the element $\tau(s\Omega,\overline{M} \times N)$ decomposes as the composite:
\[ \overline{M} \times N \longrightarrow \BUSO \times \BUSO \longrightarrow \BUSO. \]

\smallskip
\noindent
Since $\alpha$ is a primitive class, the pullback of $\alpha$ along $\tau(s\Omega,\overline{M} \times N)$ is given by $\alpha_\ast \wedge 1 \, + \, 1 \wedge \alpha_\ast$ under the decomposition $s\Omega(\overline{M} \times N)_{\E} = s\Omega(\overline{M})_{\E} \wedge_{\E} s\Omega(N)_{\E}$.  This argument shows that the elements $\alpha_\ast$ are primitive with respect to the monoidal structure. 

\smallskip
\noindent
Next we will show that the construction $\alpha \mapsto \alpha_\ast$ is functorial. This will yield a map of $\E^\ast$-modules that we seek:
\[ \mathcal{P}(\E) : \mathcal{P}_{\E}(\BUSO) \longrightarrow \End(hs \ess \wedge_{s\Omega} \E). \]

\medskip
\noindent
To show functoriality, we need to show that the diagram:
\[
\xymatrix{
s\Omega(L,M)_{\E} \wedge_{\E} s\Omega(M, N)_{\E} \ar[d] \ar[r]^{\alpha_\ast \wedge \alpha_\ast}  & s\Omega(L, M)_{\E} \wedge_{\E} s\Omega(M, N)_{\E} \ar[d]  \\
s\Omega(L, N)_{\E} \ar[r]^{\alpha_\ast} & s\Omega(L, N)_{\E},}
\]

\medskip
\noindent
commutes; where the vertical maps are induced by composition in $sh\mathbb{S}$, and the top horizontal map: $\alpha_\ast \wedge \alpha_\ast : s\Omega(L, M)_{\E} \wedge_{\E} s\Omega(M, N)_{\E} \longrightarrow s\Omega(L, M)_{\E} \wedge_{\E} s\Omega(M, N)_{\E}$ denotes the external smash product of the two maps $\alpha(\overline{L} \times M)_\ast$ and $\alpha(\overline{M} \times N)_\ast$, where we write $\alpha(X)_\ast$ to indicate that it is an operator on $s\Omega(X)_{\E}$. 

\smallskip
\noindent
By the primitivity of $\alpha_\ast$, we write $\alpha(X \times Y)_\ast$ as the sum $\alpha(X)_\ast \wedge 1 \, + \, 1 \wedge \alpha(Y)_\ast$. This decomposition allows us to reduce the general case to the special case when $L$ and $N$ are a point. In other words, we would like to show that the following special case of the above diagram commutes:
\[
\xymatrix{
s\Omega(\ast, M)_{\E} \wedge_{\E} s\Omega(M, \ast)_{\E} \ar[d] \ar[r]^{\alpha_\ast \wedge \alpha_\ast}  & s\Omega(\ast, M)_{\E} \wedge_{\E} s\Omega(M, \ast)_{\E} \ar[d]  \\
\E \ar[r]^{0} & \E.}
\]

\medskip
\noindent
To show this, recall that the composition $s\Omega(\ast, M)_{\E} \wedge_{\E} s\Omega(M, \ast)_{\E} \longrightarrow \E$ is obtained by restricting along the diagonal $\Delta : M \longrightarrow M \times \overline{M}$. By claim \ref{torsion}, we see that the restriction of $\tau(s\Omega, M \times \overline{M})$ along $\Delta$ is trivial. It follows that the $\alpha_\ast \wedge \alpha_\ast$ followed by composition is trivial. 
\end{proof}

\noindent
{\bf \S 3.2 The structure of primitives:} 

\medskip
\noindent
Let $\mathcal{P}_{\E}(\BU) \subset \tilde{\E}^\ast(\BU)$ denote the submodule of primitives. For complex oriented theories it is a standard fact that this is a free (completed) $\E^\ast$-module generated by the Newton polynomials $N_k(c_1, \ldots, c_k)$, in the Chern classes. These Newton polynomials $N_i(\sigma_1, \cdots, \sigma_i)$ are defined by writing the power symmetric functions $x_1^i + x_2^i + \cdots + x_i^i$ in terms of the elementary symmetric functions $\sigma_1, \sigma_2, \cdots, \sigma_i$. 

\medskip
\noindent
Rationally, the classes $N_k$ can be expressed in terms of the Chern classes $c_k$, or the classes $d_k$ of remark \ref{newton}, by comparing the homogeneous terms in the formal graded equalities (see \cite{McD} Ch.1):
\[ \sum_{k \geq 0} c_k = \prod_{i \geq 0} \exp{\frac{(-1)^i N_{i+1}}{i+1}}, \quad \quad \sum_{k \geq 0} d_k = \prod_{i \geq 0} \exp{\frac{-2N_{2i+1}}{2i+1}}. \]
Up to a scaling factor of $k!$, $N_k(c_1,\ldots, c_k)$ is the homogeneous degree $2k$ term in the Chern character $ch_k$ for the universal virtual vector bundle over $\BU$. Notice that any theory $\E$ such that $\E_\ast$ is a $\Q$-vector space, is complex orientable. 

\begin{thm} \label{Lie2}
Assume that $\E_\ast$ is a $\Q$-vector space. Fix a complex orientation on $\E$ (see remark \ref{orient} below). Then, in cohomology, the map induced by the projection: \[ \mathcal{P}_{\E}(\BUSO) \longrightarrow \mathcal{P}_{\E}(\BU)\] is injective onto the free (completed) $\E^\ast$-module generated by the primitives $N_{2k+1}(c_1, \ldots, c_{2k+1})$ in degree $4k+2$, with $k \geq 0$. In particular, the image of $\mathcal{P}(\E)$ is generated by operators acting on $s\Omega(M)$ via multiplication with the classes $ch_{2k+1}(\tau)$ (compare with \cite{KO} Theorem 9). 
\end{thm}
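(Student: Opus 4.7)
The plan is to exploit the fact that both $\BU$ and $\BUSO$ are H-spaces, so the projection $\pi: \BU \to \BUSO$ induces a Hopf algebra map $\pi^*: \E^*(\BUSO) \to \E^*(\BU)$, and rationalization lets us convert primitivity questions into linear-algebra statements inside a polynomial Hopf algebra. The work has three steps: (i) reduce to complex orientability and identify the full Hopf algebra $\E^*(\BU)$; (ii) pin down the image of $\pi^*$ as a subalgebra; (iii) compute the primitives of that subalgebra and then translate the result back to the action on $s\Omega(M)$.

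First I would observe that since $\E_*$ is a $\Q$-vector space, $\E$ is complex orientable, so a chosen complex orientation gives $\E^*(\BU) \cong \E^*[[c_1,c_2,\dots]]$ as a complete Hopf algebra with the Whitney-sum coproduct. Under this identification, the $\E^*$-module of primitives $\mathcal{P}_{\E}(\BU)$ is well-known to be the free completed $\E^*$-module generated by the Newton polynomials $N_k(c_1,\dots,c_k)$ in degree $2k$ (this is the standard fact recalled before the statement of the theorem, and it follows from the logarithm of the total Chern class expressing $N_k$ as a multiple of the $k$-th component of the universal Chern character). The injectivity claim for $\pi^*$ on primitives reduces at once to the injectivity of $\pi^*$ on all of $\E^*$-cohomology. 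Since the projection $\BU \to \BUSO$ is injective in ordinary cohomology away from $2$ (Remark \ref{newton}), and $\E$ is rational, the Atiyah--Hirzebruch spectral sequence for $\pi$ degenerates and we get injectivity of $\pi^*$ with $\E$-coefficients.

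Next I would determine the image of $\pi^*$ as a subalgebra of $\E^*(\BU)$. By Remark \ref{newton}, the image in integral cohomology (away from $2$) is the subalgebra generated by classes $d_{2i+1}$ in degree $4i+2$ satisfying the generating-function identity
\[
\sum_{k \geq 0} d_k \;=\; \prod_{i \geq 0} \exp\!\left(\tfrac{-2\, N_{2i+1}}{2i+1}\right)\!.
\]
Taking formal logarithms, this identity shows that, rationally, the subalgebra generated by the $d_{2i+1}$ inside $\E^*(\BU) \otimes \Q$ equals the completed polynomial subalgebra $\E^*[[N_{2i+1} \,:\, i\geq 0]]$. So the image of $\pi^*$ is precisely the sub Hopf algebra generated by the odd-index Newton polynomials, and these are primitive in the ambient Hopf algebra $\E^*(\BU)$.

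Now I would read off the primitives. In any commutative completed polynomial Hopf algebra over a $\Q$-algebra whose generators are themselves primitive, the module of primitives is exactly the $\E^*$-linear span of those generators (the coproduct $\Delta x = x\otimes 1 + 1\otimes x$ on generators forces any primitive to sit in the degree-one part of the polynomial filtration). Hence the primitives inside the image are the free $\E^*$-module on $\{N_{2k+1}(c_1,\dots,c_{2k+1})\}_{k\geq 0}$. Combined with injectivity of $\pi^*$, this proves the first assertion. For the statement about $\mathcal{P}(\E)$, Theorem \ref{torsion} says $\tau(s\Omega,M): M \to \BUSO$ lifts uniquely through $\BU$ via the tangent-bundle classifier $\tau(M)$; therefore the pullback of $N_{2k+1}$ under $\tau(s\Omega,M)$ coincides with $N_{2k+1}(c_1(\tau M),\dots,c_{2k+1}(\tau M))$, which is a rational multiple of $ch_{2k+1}(\tau)$. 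By the description of $\mathcal{P}(\E)(\alpha)$ in the proof of Theorem \ref{Lie1} as a cap-product/multiplication with this pulled-back class, the image of $\mathcal{P}(\E)$ is generated by multiplication by the odd components of the Chern character, as claimed.

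The only step that requires genuine care is the identification of the image of $\pi^*$ with the subalgebra $\E^*[[N_{2i+1}]]$; the rest is formal once the generating-function identity from Remark \ref{newton} is taken seriously and interpreted multiplicatively in the completed Hopf algebra.
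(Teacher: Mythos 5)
Your proof is correct and is essentially the paper's argument: both reduce to the rational (generalized Eilenberg--MacLane) case and rest on Remark \ref{newton}, the paper phrasing the key step dually as ``$\BU \to \BUSO$ induces an isomorphism on homology indecomposables in degrees $4k+2$'' where you instead establish injectivity of $\pi^*$ on all of $\E^*$-cohomology, identify its image as the completed subalgebra on the odd Newton polynomials via the generating-function identity, and compute the primitives of that subalgebra directly. The only point you leave tacit is that $\pi^*$ also \emph{reflects} primitivity (so that every primitive in the image subalgebra lifts to $\mathcal{P}_{\E}(\BUSO)$), which follows from injectivity of $\pi^*\otimes\pi^*$ in this free, rational setting and is exactly what the paper's dualization handles automatically.
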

\begin{proof}
Since $\E_\ast$ is a $\Q$-vector space, we may assume $\E$ is a generalized Eilenberg--MacLane spectrum. 
Notice that the projection map $\BU \longrightarrow \BUSO$ is a map of H-spaces, and in homology, it maps the indecomposable elements in degrees $4k+2$ isomorphically onto the indecomposables in the Hopf-algebra $\E_\ast(\BUSO)$ (see remark \ref{newton}). Dually, it follows that $\mathcal{P}_E(\BUSO)$ maps isomorphically to the completed subspace generated by primitives in degree $4k+2$ in $\mathcal{P}_{\E}(\BU)$. By the above discussion, this is the completed subspace generated by the odd Newton polynomials in the Chern classes. 
\end{proof}

\medskip
\noindent

\begin{corr} \label{expo} Working in characteristic zero, let $G(\E)$ denote the (pro) abelian group generated by the formal exponentials of the form $\Exp(t \, ch_{2k+1}(\tau))$, with $t$ being any homogeneous element of degree $4k+2$ in the $\Q$-vector space $\E_\ast$, where $k \geq 0$. Then $G(\E)$ acts by degree-preserving monoidal automorphisms on the category $sh\ess \wedge_{s\Omega} \E$ that fix each object. In particular, $G(\E)$ is a subgroup of the Galois group of automorphisms $G_{s\Omega}(\E)$. Presently, we will describe a canonical integral form for this group. 
\end{corr}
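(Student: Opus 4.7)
The plan is to deduce the corollary directly from Theorems \ref{Lie1} and \ref{Lie2} by exponentiating the primitive natural transformations already produced there. First, for each $k \geq 0$ and each homogeneous $t \in \E_{4k+2}$, the class $t \cdot ch_{2k+1}(\tau)$ lies in the image of $\mathcal{P}(\E)$ and has total cohomological degree zero; by Theorem \ref{Lie1} it gives rise to a degree-zero primitive (i.e.\ Leibniz) endomorphism $D_{t,k} := (t \cdot ch_{2k+1}(\tau))_\ast$ of $\mathcal{F}_{\E}$, satisfying $D_{t,k}(X \wedge_{\E} Y) = D_{t,k}(X) \wedge_{\E} Y + X \wedge_{\E} D_{t,k}(Y)$.

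Second, I would verify that the formal exponential $\Exp(D_{t,k})$ is a genuine, well-defined endofunctor. On any fixed symplectic manifold $M$ of real dimension $2m$, the operator $D_{t,k}$ acts as cap product with the class $t \cdot ch_{2k+1}(\tau(M)) \in \E^\ast(\underline{\mathcal{G}}(M))$, which vanishes as soon as $2k+1 > m$; hence only finitely many of the $D_{t,k}$ act nontrivially on any given $M$, and each $D_{t,k}$ is itself pointwise nilpotent on $\mathcal{F}_{\E}(M) = s\Omega(M)_{\E}$. Since $\pi_\ast \E$ is a $\Q$-vector space, the rational denominators appearing in the exponential series cause no problem, so $\Exp(\pm D_{t,k})$ is an $\E^\ast$-linear, degree-preserving automorphism of $\mathcal{F}_{\E}(M)$, natural in $M$.

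Third, the standard calculation
\[
\Exp(D_{t,k})(X \wedge_{\E} Y) = \sum_{n \geq 0} \frac{1}{n!} \sum_{i+j=n} \binom{n}{i} D_{t,k}^{\,i}(X) \wedge_{\E} D_{t,k}^{\,j}(Y) = \Exp(D_{t,k})(X) \wedge_{\E} \Exp(D_{t,k})(Y),
\]
which uses only iterated Leibniz, shows that each $\Exp(D_{t,k})$ is a monoidal automorphism of $\mathcal{F}_{\E}$. Distinct operators $D_{t,k}$ and $D_{s,l}$ commute because they act by cap product with even-degree cohomology classes, and such classes commute in any graded-commutative ring. Consequently the collection of all $\Exp(D_{t,k})$ generates an abelian group of degree-preserving monoidal automorphisms of $\mathcal{F}_{\E}$; the inverse of $\Exp(D_{t,k})$ is $\Exp(-D_{t,k})$.

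Finally, the pro-structure is forced on us because, although on each fixed $M$ only finitely many $ch_{2k+1}(\tau(M))$ contribute, there is no uniform bound across all objects of $hs\ess$, so $G(\E)$ must be realized as the inverse limit over truncations in $k$. The main technical obstacle I anticipate is checking that these truncations are coherently compatible with the monoidal structure across all $M$ simultaneously; but since $\mathcal{F}_{\E}$ lands in a homotopy category, coherence need only be verified up to the homotopies already recorded in the proof of Theorem \ref{Lie1}, and the naturality there propagates to the truncations automatically. Granting this, $G(\E)$ embeds into $\Aut_\wedge(\mathcal{F}_{\E})$ and hence sits inside the motivic Galois group, as claimed.
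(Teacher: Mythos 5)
Your proposal is correct and follows the same route the paper intends: the corollary is stated there without separate proof, as the immediate consequence of Theorems \ref{Lie1} and \ref{Lie2} obtained by exponentiating the primitive (Leibniz) operators $t\,ch_{2k+1}(\tau)_\ast$, which is exactly what you carry out, with the additional (welcome) explicit checks of pointwise nilpotence on each finite-dimensional $M$, the binomial identity giving monoidality of the exponential, and commutativity of the generators.
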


\begin{remark} \label{orient}
The reader may wish to verify that the completed subspace generated by the odd Newton polynomials in the Chern classes is independent of the choice of complex orientation. In particular, the same holds for the group $G(\E)$. The reader may also verify that the action of $G(\E)$ restricts to the identity on the subgroup: $\pi_0(s\Omega(M,N)) \subset \pi_\ast (s\Omega(M,N) \wedge_{s\Omega} \E)$. 
\end{remark}

\begin{remark} \label{mirror symmetry}
Given a Calabi-Yau manifold $M$, homological mirror symmetry gives rise to an action of the Grothendieck--Teichm\"uller group on $\mbox{H}^*(M, \C)$ under the identification of $\mbox{H}^*(M, \C)$ with the cohomology of poly-vector fields on $M$ (see \cite{KO}, Theorem 9). Remarkably, this action is exactly the same as that of $G(\HC)$ on $s\Omega(M)_{\HC}$, once we use the Thom isomorphism to identify $s\Omega(M)_{\HC}$ with $\mbox{H}^*(M,\C)$. It would be very interesting to give a geometric description of this identification. Also see remark \ref{mirror symmetry2}.
\end{remark}

\medskip
\noindent
{\bf \S 3.3 An integral candidate for the (abelianized) Grothendieck--Teichm\"uller group:} 

\medskip
\noindent
In theorems \ref{Lie1} and \ref{Lie2} we described the Lie algebra of the group $G(\E)$ as being the primitives in $\tilde{\E}^\ast(\BUSO)$. This implies that the cotangent space of $G(\E)$ at the identity element should be interpreted as the vector space dual to these primitives. This dual space can be canonically identified with a subspace of the  indecomposables: $\De(\E_\ast(\BUSO)) = \mbox{I}/\mbox{I}^2 $, where $\mbox{I}$ is the augmentation ideal in $\E_\ast(\BUSO)$. This suggests that one must think of the commutative ring spectrum $s\Omega \wedge \BUSO_+$ as ``functions on a derived avatar" of the abelianized Grothendieck--Teichmuller group. 

\smallskip
\noindent
The spectrum $s\Omega \wedge \BUSO_+$ can be constructed functorially from $s\Omega$: Indeed, we know by \cite{B} (Prop. 7.3), that $s\Omega \wedge \BUSO_+$ is equivalent to $\THH(s\Omega)$ as commutative algebras. Notice in fact, that $s\Omega \wedge \BUSO_+$ is a commutative Hopf-algebra spectrum in the category of $s\Omega$-module spectra \cite{AR}. The above discussion leads naturally to the:
\begin{defn} \label{GTgroup}
Define a derived group scheme ${\bf G} = \mbox{Spec} \, \THH(s\Omega)$, whose $\E$-points for an arbitrary commutative $s\Omega$-algebra $\E$ is defined to be the group of homotopy classes of $s\Omega$-algebra maps from $\THH(s\Omega)$ to $\E$:
\[ {\bf G}(\E) = \Alg_{s\Omega}(\THH(s\Omega), \E) = \Alg_{\Sph}(\BUSO_+, \E). \]
\end{defn}

\begin{thm} \label{thh}
Given a commutative $s\Omega$-algebra $\E$, the group ${\bf G}(\E)$ acts by degree preserving monoidal automorphisms on the category $hs\ess \wedge_{s\Omega} \E$ that fix each object. 
\end{thm}
\begin{proof}
The proof of this theorem is similar to the proof of theorem \ref{Lie1}. As before, given $\beta \in {\bf G}(\E)$, we get an automorphism of $s\Omega(\overline{M} \times N)_{\E}$ given by the cap product with $\beta$:
\[ \beta_\ast : s\Omega(\overline{M} \times N)_{\E} \longrightarrow  s\Omega(\overline{M} \times N)_{\E} \wedge (\overline{M} \times N)_+ \longrightarrow s\Omega(\overline{M} \times N)_{\E} \wedge \BUSO_+ \longrightarrow s\Omega(\overline{M} \times N)_{\E}. \]
Since $\beta$ is a map of algebras, we see that $\beta_\ast$ preserves the monoidal structure. So the only thing left to check is that $\beta_\ast$ is functorial. As in the proof of theorem \ref{Lie1}, we may reduce this question to showing that the following diagram commutes:
\[
\xymatrix{
s\Omega(\ast, M)_{\E} \wedge_{\E} s\Omega(M, \ast)_{\E} \ar[d] \ar[r]^{\beta_\ast \wedge \beta_\ast}  & s\Omega(\ast, M)_{\E} \wedge_{\E} s\Omega(M, \ast)_{\E} \ar[d]  \\
\E \ar[r]^{=} & \E.}
\]
Again, as in the proof of \ref{Lie1}, this requires showing that the restriction of $\tau(s\Omega, M \times \overline{M})$ along $\Delta$ below, factors through the unit of $\E$:
\[ M_+ \longrightarrow M_+ \wedge \overline{M}_+ \longrightarrow \BUSO_+ \longrightarrow \E. \]
But this follows from theorem \ref{torsion}, and the fact that $\beta$ is a ring map. 
\end{proof}

\noindent
{\bf \S 3.4 The (abelianized) Grothendieck--Teichm\"uller group as a Motivic group:}

\medskip
\noindent
Given a symplectic manifold $(M,\omega)$ recall that the morphism spectrum $\Omega(\ast, M)$ in $h\ess$ can be identified with the $\Omega$-module spectrum $\Omega(M)$. Let $\pi_0 (\ess)$ denote the truncation of $\ess$, so that the objects of $\pi_0(\ess)$ are the same as those of $h\ess$, and morphisms given by $\pi_0 (\Omega(M,N))$. Given a commutative $\Omega$-algebra $\E$, right composition in $h\ess$ gives rise to a functor $\mathcal{F}_{\E}$ with values in $\E$-module spectra:
\[ \mathcal{F}_{\E} :  \pi_0(\mathbb{S}) \longrightarrow \E \mathcal{S}, \quad \quad \mathcal{F}_{\E}(M) = \Omega(M)_{\E} := \Omega(M) \wedge_{\Omega} \E,\] 
where $\E \mathcal{S}$ denotes the homotopy category of $\E$-module spectra. Recall that the category $h\mathbb{S}$ is a symmetric-monoidal category, with the monoidal structure given by the cartesian product of symplectic manifolds. Since $\Omega(M \times N)$ is equivalent to $\Omega(M) \wedge_\Omega \Omega(N)$, the functor $\mathcal{F}_{\E}$ is monoidal. 

\medskip
\noindent
Constructions analogous to the ones described above can be made in oriented stable symplectic homotopy category: $hs\ess$. Furthermore, these constructions remain nontrivial when tensored with $\Q$. Henceforth, we will work over $\Q$. Now, given a commutative $s\Omega$-algebra $\E$, and an element $\beta \in G(\E)$ as defined in corollary \ref{expo}, recall that we have an $\E$-module automorphism: 
\[ \beta_\ast(M) : s\Omega(M)_{\E} \longrightarrow s\Omega(M)_{\E}. \]
Furthermore, $\beta_\ast$ is monoidal, i.e. $\beta_\ast(M \times N) = \beta_\ast(M) \wedge \beta_\ast(N)$ under the monoidal structure of $hs\ess$. In addition, from remark \ref{orient}, we know that composition induces a map: 
\[ \mathcal{F}_{\E} : \pi_0(s\Omega(M,N)) \longrightarrow [s\Omega(M)_{\E}, s\Omega(N)_{\E}], \]
lands inside operators that commute with $\beta_\ast$. In other words, we see that $\beta_\ast$ is a natural automorphism of the functor $\mathcal{F}_{\E}$. In particular, we see that the abelianized Grothendieck--Teichm\"uller groupscheme is a natural sub groupscheme of the motivic groupscheme of monoidal automorphisms of the functor $\E \longmapsto \mathcal{F}_{\E}$.

\begin{remark}\label{rigid}
Recall that the map: 
\[ \mathcal{F}_{\E} : \pi_0(s\Omega(M,N)) \longrightarrow [s\Omega(M)_{\E}, s\Omega(N)_{\E}] \]
is an equivalence if $M$ is a compact manifold. In particular, $\mathcal{F}_{\E}$ is a rigid monoidal functor on the subcategory of $\pi_0(\mathbb{S})$ generated by compact symplectic manifolds $(M, \omega)$. 
\end{remark}

\begin{remark}
Consider the map $\BU \longrightarrow \THH(s\Omega)$ induced by the inclusion of the unit $\Sph \longrightarrow s\Omega$: 
\[ \BU \longrightarrow \BUSO_+ \longrightarrow s\Omega \wedge \BUSO_+ = \THH(s\Omega). \]
This map may be shown to factor through $\BU \longrightarrow \mbox{K}(s\Omega)$ lifting the Dennis trace map. This suggests a close relation between the Waldhausen $K$-theory of $s\Omega$ and the pro-abelian group scheme {\bf G}. This is strikingly reminiscent of Kato's higher classfield theory \cite{Kato}[Theorem 2.1], which relates the algebraic K-theory of higher local fields to the Galois groups of their abelian extensions.\end{remark}

\medskip
\noindent
{\bf \S 3.5 Final remarks and speculation}

\bigskip

\noindent
In the sections that follow, we borrow notation from \cite{M1}, where $\Sph[\Gr_+]$ denotes the suspension spectrum of $\Gr$, viewed as a kind of group ring, for an $H$-space $\Gr$. 

\medskip
\noindent
Kontsevich's 1999 paper suggests that an action of $\GT_\Q$ defines a deformation of the complexified $\hat{A}$-genus, which can be interpreted as associated to the formal group law with $\Gamma(x)^{-1}$ as its exponential (where $\Gamma(x)$ denotes the Gamma-function, \cite{M1}[\S 2.3.1]): We seek to generalize Kontsevich's result to our context. Let $\MU \longrightarrow \MSO$ denote the forgetful map, and let $\MU \longrightarrow \Sph[\BU_+] \wedge \MU$ denote the diagonal map. Then given a commutative ring spectrum $\E$ with an $\SO$-orientation $\rho : \MSO \longrightarrow \E$, we have a map of ring spectra $\Gamma_\rho$:
\[ \Gamma_\rho : \MU \longrightarrow \Sph[\BU_+] \wedge \MU \longrightarrow  \Sph[\BUSO_+] \wedge \MSO \longrightarrow \Sph[\BUSO_+] \wedge \E, \]
Note that the map $\Gamma_\rho$ above can be expressed as a morphism:
\[
\Gamma_\rho : \MU \longrightarrow \THH(s\Omega) \wedge_{s\Omega} \E.
\] 
Using this description, one may generalize Kontsevich's construction in our framework: namely, one may define a torsor of deformations of the $\rho$-orientation under the action of the group ${\bf G}(\E)$ as follows: Given an element in ${\bf G}(\E)$ represented by a ring map: $\beta : \BUSO_+ \longrightarrow \E$, the corresponding deformation of $\rho$ is given by capping $\beta$ with $\Gamma_\rho$:
\[ \rho_\beta = \beta \cap \Gamma_\rho : \MU \longrightarrow \Sph[\BUSO_+] \wedge \E \longrightarrow \E. \]

\smallskip
\noindent
There is a metaplectic analog of this whole picture. Recall (\cite{N}, \S8), that the unitary group $\U$ admits a natural double cover $\tU$ that supports the square-root of the determinant homomorphism. The forgetful map $\U \longrightarrow \SO$ lifts to a unique map $\tU \longrightarrow \Spin$. Therefore, given a spectrum $\E$ with a  $\Spin$-orientation $\rho : \MSpin \longrightarrow \E$, we have the corresponding:
\[
\tilde{\Gamma}_\rho : \MtU \longrightarrow \THH(s\tilde{\Omega}) \wedge_{s\tilde{\Omega}} \E,
\] 
leading to a ${\bf G}(\E)$-torsor of deformations of $\rho$ as before. 

\begin{remark}
Notice that the maps: 
\[ \MtU \longrightarrow \Sph[\BtUSpin_+] \wedge \MSpin, \quad \quad \MU \longrightarrow \Sph[\BUSO_+] \wedge \MSO \]
 are equivalences away from the prime two. In addition, the spaces $\BtUSpin$, $\BUSO$ and $(\SpU)$ are also equivalent away from two. 
\end{remark}
\medskip
\noindent
An example of such a deformation is the one suggested by Kontsevich above: Let $\E$ denote complex K-theory with complex coefficients: $\KU \otimes \, \C$ supporting the $\hat{A}$-orientation (or rather, its scalar extension by $\C$): $\hat{A} : \MSpin \longrightarrow \KU \otimes \, \C$. 
In \cite{M1}[\S 2.3.1]), we construct a deformation $\hat{A}_\zeta$ by specializing the generators of the polynomial algebra $\sH_\ast(\SpU, \Q)$ to odd zeta-values in $\KU \otimes \, \C$, graded using suitable powers of the Bott class. The genus associated to $\hat{A}_\zeta$ is therefore the composite: 
\[
\hat{A}_{\zeta} : \MU_*  \longrightarrow \sH_*(\SpU_+,\KU \otimes \, \Q) \longrightarrow \KU_* \otimes \, \C 
\]
which restricts to the $\hat{A}$-genus on $\MSp$ (which is $\MSO$, or $\MSpin$ away from two), and sends manifolds of dimension $\equiv 2$ mod 4 to $i\R \subset \C$; more precisely, the primitives of $\sH_*(\SpU_+,\Q)$, interpreted as symmetric functions, are sent (as explained in the appendix below) to odd zeta-values, graded using the Bott class.

\medskip
\noindent
Let us now describe the geometry behind this deformation induced by the group ${\bf G}(\E)$. First consider the spectrum $\BUSO_+ \wedge \MSO$. Using the language used in Section \S2, we may identify an element in $\pi_k(\BUSO_+ \wedge \MSO) = \pi_k(\THH(s\Omega) \wedge_{s\Omega} \MSO)$ as the stabilization (with respect to the integer $n$) of the data given by a concordance class of oriented manifolds embedded in euclidean space: $M^{k+n} \subset \R^\infty \times \R^n$, that are proper over $\R^n$ and are endowed with a principal bundle of oriented lagrangian grassmannians which is classified by a map $\theta : M^{k+n} \longrightarrow \BUSO$. In this language, the map $\Gamma_\rho : \pi_k(\MU) \longrightarrow \pi_k(\THH(s\Omega) \wedge_{s\Omega} \MSO)$ described above identifies a stably almost complex manifold $M^k$ with the underlying oriented manifold, endowed with the formal negative of the bundle $\xi$, where $\xi : s\underline{\mathcal{G}}(M) \longrightarrow M$ is the bundle of oriented lagrangians in the tangent bundle (see \S2). Given an $\E$-valued genus $\rho$, the action of ${\bf G}(\E)$, in the language used above, corresponds to deforming $\rho$ along different choices of multiplicative $\E$-theory characteristic classes of the bundle $\theta$. The analogous statements hold in the metaplectic case. 

\medskip
\noindent
\begin{remark}
It is a compelling question to ask about the meaning of these (lagrangian-bundle) deformations in terms of elliptic differential operators (like the Dirac operator) that define genera. This framework bears a striking resemblance to the construction of the analytic torsion classes \cite{BL} and one would like to know if there is any relation. 
\end{remark}

\section{{Appendix}: Grothendieck-Teichm\"uller groups}\label{hotGT}

\noindent
We recall some of the complex history \cite{YA}(\S 25.9) of this subject, which has deep connections to homotopy theory but which may be unfamiliar. 

\medskip
\noindent
For topologists, the operad defined by Artin's braid groups is a good place to start: it has few automorphisms, but its completions are less rigid. Ihara has studied its system of profinite completions, but for our purposes Drinfeld's work \cite{DQ} on the automorphism group of its system of Malcev $\Q$-completions will be more relevant. Kontsevich compared the latter object to the group of homotopy automorphisms of the rational chains on the little disk operad \cite{Fr}, and suggested \cite{KO}[\S 4.4] that both these objects are isomorphic to the motivic Galois group of a certain \cite{DG} Tannakian category of mixed Tate motives. \medskip

\noindent
We follow Kontsevich's lead here, and refer to all these conjecturally equivalent pro-algebraic $\Q$-groupschemes as {\bf the} Grothendieck-Teichm\"uller group $\GT_\Q$; we take it to be an extension of the form:
\[
1 \to \mF \longrightarrow \GT_\Q \longrightarrow \Gm \to 1
\]
with $\mF$ pro-unipotent, defined by a graded Lie algebra $\mf$ free on generators $z_{2i+1}, \; i>0$, associated to the generators of the rank one abelian groups $\mbox{K}^\alg_{4n+1}(\Z)$ through the manifestation of $\GT_\Q$ as the motivic group of a $\Q$-linear category $\MTM_{\Z}$ of mixed Tate motives, generated by certain cell-like objects $\Z(n)$ satisfying
\[
\Ext^*_{\MTM}(\Z(0),\Z(n)) = \Q \; {\rm if} \;  * = 0 \; {\rm and} \; n = 0
\]
\[
\; = \; \mbox{K}^\alg_{2n+1}(\Z) \otimes \Q \; {\rm if} \; * = 1,
\]
these groups being zero otherwise. The derived group $\mF$ of $\GT_\Q$ has, as its abelianization, a groupscheme represented by a commutative and cocommutative Hopf algebra over $\Q$, with primitives in topological degree 4k+2, corresponding to $\sH_*(\SpU;\Q)$. Borel's work on regulators thus relates the generators $z_{2i+1}$ to the (conjecturally transcendental) zeta-values $\zeta(2i+1)$. 
\medskip

\noindent
The affine groupscheme $\mF$ has a $\Gm$-action, and the associated commutative Hopf algebra of functions $\Qsymm \otimes \Q$ is dual to the universal enveloping algebra of its Lie algebra $\mf$. That is the free associative $\Q$-algebra:
\[
U(\mf) = \Q \langle \langle z_{2i+1} \:|\: i > 0 \rangle \rangle = \Nsymm
\otimes \Q
\]
of rational noncommutative symmetric functions \cite{BR,CPrim}, with diagonal defined by the juxtaposition coproduct. The (self-dual) Hopf algebra $\Symm$ of classical symmetric functions is dual to the abelianization of $\Nsymm$: the abelianization $\mf \to \mf_\ab$ defines a homomorphism $U(\mf) \to U(\mf_\ab)$ dual to the inclusion of the symmetric functions in the quasisymmetric ones, defining a quotient 
\[
1 \to \mF_\ab \longrightarrow \widetilde{\GT}_\Q \longrightarrow \Gm \to 1
\]
of $\GT_\Q$ with $\mf_\ab$ as its (graded abelian) Lie algebra. \medskip

\noindent
One consequence of deep work of Hatcher, Waldhausen, B\"okstedt, Rognes and others is a rational equivalence \cite{M2}(\S 3)
\[
(\Sph \vee \Sigma \kO)_\Q \longrightarrow \mbox{K}(\Sph)_\Q
\]
of spectra, which yields a canonical identification 
\[
\Sph_\Q \wedge^L_{K(\Sph)} \Sph_\Q \; \cong \; \Qsymm_\Q
\]
of the rational covariant Koszul dual of Waldhausen's K-theory of the sphere spectrum with a model for the Hopf algebra of functions on $\GT_\Q$. The analogous Koszul dual of the stabilization morphism
\[
\Sph[\SUSO] \longrightarrow \Sph \wedge \Sigma \kO
\]
of ringspectra defines a morphism
\[
\Sph_\Q \wedge^L_{\Sph[\SUSO]} \Sph_\Q \; \cong \; \Symm_\Q \to \Qsymm_\Q
\]
representing the quotient $\GT_\Q \to \widetilde{\GT}_\Q$. \medskip

\noindent
The map that sends the Newton's power sums
\[
N_n \; := \: \sum_{k \geq 1} x_k^n \in \Symm \subset \Qsymm
\]
to the real number given by the Riemann-zeta value $\zeta(n)$, under the specialization that sends $x_k \mapsto 1/k$ can, with some care, be extended from $\Symm$ to a ring homomomorphism:
\[
\bzeta : \Qsymm \longrightarrow \MZV \subset \R
\]
with values in a certain graded algebra of real multizeta values\begin{footnote}{The case $n=1$, ie $i=0$, which needs special treatment \cite{CB}, is interestingly absent from the constructions considered above.}\end{footnote}.\medskip

\noindent
Remarkably enough, these multizeta values play an important role in Connes,Kreimer, and Marcolli's Galois-theoretic reinterpretation \cite{CM1,CM2} of the classical BPS renormalization theory of Feynman integrals, in which MZVs appear ubiquitously in explicit computations. Kontsevich found an action \cite{KO}[\S 4.6 Th 9] of the abelianization of $\GT_\Q$ on a moduli space for deformation quantizations of Poisson manifolds, through an action of the little disk operad on Hochschild homology. These developments led Cartier \cite{CAMS} to suggest that $\GT_\Q$ is in some sense a {\em cosmic} Galois group.\medskip

\begin{remark} \label{mirror symmetry2}
As noted in Remark \ref{mirror symmetry}, Kontsevich \cite{K1}[\S 4.6.2, 8.4] shows that the graded cohomology algebra
\[
\sH^*(M,\Lambda^*T)
\]
of polyvector fields on a complex manifold is isomorphic to a kind of Hochschild cohomology
\[
\HHh^*(M) \; = \; \Ext^*_{\oh_{M \times M}}(\oh_M,\oh_M)
\]
defined in terms of the coherent sheaf of holomorphic functions on $M \times M$ and its diagonal. If $M$ is Calabi-Yau, we can identify its tangent and cotangent bundle, obtaining an isomorphism of both cohomologies with the Hodge cohomology of $M$, defining  an action of (some version of) the Grothendieck-Teichm\"uller group on the de Rham cohomology of $M$. The homotopy-theoretic point of view suggests the Calabi-Yau hypothesis may be unnecessarily strong.\bigskip
\end{remark}

\pagestyle{empty}
\bibliographystyle{amsplain}
\providecommand{\bysame}{\leavevmode\hbox
to3em{\hrulefill}\thinspace}

\end{document}